\documentclass[11pt]{amsart}
\usepackage[cm]{fullpage}
\addtolength{\oddsidemargin}{2cm}
\addtolength{\evensidemargin}{2cm}
\addtolength{\textwidth}{-4cm} \addtolength{\topmargin}{1.5cm}
\addtolength{\textheight}{-3.5cm}

\usepackage{amssymb,amsmath,amsthm,amscd,mathrsfs,graphicx}
\usepackage[cmtip,all]{xy}
\usepackage{pb-diagram}

\numberwithin{equation}{section}

\newtheorem{theorem}{Theorem}[section]
\newtheorem{proposition}{Proposition}[section]
\newtheorem{lemma}{Lemma}[section]
\newtheorem{corollary}{Corollary}[section]

\newcommand{\al}{\alpha}

\newcommand{\ov}[1]{\overline{#1}}

\newcommand{\ve}{\varepsilon}

\theoremstyle{definition}

\theoremstyle{remark}

\begin{document}
\bibliographystyle{amsplain}

\title{Weak Harnack inequalities for eigenvalues \\ and constant rank theorems}

\begin{abstract}  We consider convex solutions of nonlinear elliptic equations which satisfy the  structure condition of Bian-Guan.  
We prove a weak Harnack inequality for the eigenvalues of the Hessian of these solutions.  This can be viewed as a quantitative version of the constant rank theorem. 

\end{abstract}

\author[G. Sz\'ekelyhidi]{G\'abor Sz\'ekelyhidi}
\address{Department of Mathematics, University of Notre Dame, 255 Hurley, Notre Dame, IN 46556}
\author[B. Weinkove]{Ben Weinkove}
\address{Department of Mathematics, Northwestern University, 2033 Sheridan Road, Evanston, IL 60208}
\thanks{Supported in part by National Science Foundation grants DMS-1906216  and DMS-2005311. Part of this work was carried out while the second-named author was visiting the Department of Mathematical Sciences at the University of Memphis and he thanks them for their kind hospitality.}

\maketitle

\section{Introduction}

Constant rank theorems in PDE have a long history, starting with work of Caffarelli-Friedman \cite{CF}, Yau (see \cite{SWYY}) and then developed further by Korevaar-Lewis \cite{KL}, Caffarelli-Guan-Ma \cite{CGM}, Bian-Guan \cite{BG, BG2} and others \cite{BGMX, GLZ, GP, HMW, L}.  These results assert that a convex solution $u$ of a certain class of elliptic or parabolic equations has Hessian $D^2u$ of constant rank.  

Constant rank theorems, also known as the ``microscopic convexity principle'', have been used to establish ``macroscopic'' convexity properties of solutions to PDEs on convex domains, now a vast area of research (see \cite{ALL, Bo, BL, CF, CSp, CMY, CW, CW2, CH, CMS,  G, GM, Ish,  Ka, Ke, K,   MS, RR, W, We} and the references therein).

One method for establishing a constant rank theorem is to compute with an expression involving the elementary symmetric polynomials $\sigma_k$ of the eigenvalues $$\lambda_1 \le \cdots \le \lambda_n$$ of the Hessian $D^2u$.  Bian-Guan \cite{BG} considered solutions of nonlinear elliptic equations
$$F(D^2 u, Du, u, x)=0,$$
under a convexity condition for $F$ (see (\ref{ellipticitycon}) below), and proved a constant rank theorem using a differential inequality for the quantity $\sigma_{\ell+1} +\frac{\sigma_{\ell+2}}{\sigma_{\ell+1}}$.  The authors \cite{SW} gave a new proof of the Bian-Guan result using the simple linear expression
\begin{equation} \label{simple}
\lambda_{\ell} + 2\lambda_{\ell-1} + \cdots + \ell \lambda_1,
\end{equation}
and a method of induction.  This approach exploited the concavity of the sums of the lowest eigenvalues.

In this paper we will also assume the Bian-Guan structure conditions.   Building on  the method of \cite{SW} and making use again of the expression (\ref{simple}) we directly prove a weak Harnack inequality for each of the eigenvalues $\lambda_i$.   This states that the $L^q$ norm for some $q>0$ is bounded above by the infimum.  We view this Harnack inequality as a \emph{quantitative} version of the constant rank theorem of Bian-Guan, which follows as an immediate consequence.

Another difference between the current paper and \cite{SW} is that we compute differential inequalities directly, holding almost everywhere, for sums of the eigenvalues $\lambda_i$. 
 In particular we avoid here the device of approximation by polynomials.

We now state our results precisely.  Let $B=B_1(0)$ be the unit ball in $\mathbb{R}^n$ and let $F$ be a real-valued function
$$F=F(A, p, u, x) \in C^2(\textrm{Sym}_n(\mathbb{R}) \times \mathbb{R}^n \times \mathbb{R} \times B),$$
where $\textrm{Sym}_n(\mathbb{R})$ is the vector space of  symmetric $n\times n$ matrices with real entries.  We assume that $F$ satisfies the condition of Bian-Guan \cite{BG} that for each $p \in \mathbb{R}^n$,
\begin{equation} \label{convexcondition}
(A, u,x) \in \textrm{Sym}_n^+(\mathbb{R}) \times \mathbb{R} \times B \mapsto F(A^{-1}, p, u, x) \quad \textrm{is locally convex},
\end{equation}
where $\textrm{Sym}_n^+(\mathbb{R})$ is the subset of $\textrm{Sym}_n(\mathbb{R})$ that are strictly positive definite.  Suppose that $u \in C^3(B)$ is a convex solution of 
\begin{equation} \label{maineqn}
F(D^2u, Du, u, x)=0,
\end{equation}
subject to the ellipticity condition that for all $\xi \in \mathbb{R}^n$,
\begin{equation} \label{ellipticitycon}
\Lambda^{-1} |\xi|^2 \le F^{ij} (D^2u, Du, u,x)\xi^i \xi^j \le \Lambda |\xi|^2, \quad \textrm{on } B,
\end{equation}
for a positive constant $\Lambda>0$, where $F^{ij}$ is the derivative of $F$ with respect to the $(i,j)$th entry $A_{ij}$ of $A$.  Our main result is as follows.

\begin{theorem} \label{mainthm}
Let $u$ be as above and let $0 \le \lambda_1 \le \cdots \le \lambda_n$ be the eigenvalues of $D^2u$.  Then there exist positive constants $C_0, q$  depending only on $n$, $\Lambda$, $\| u \|_{C^3(B)}$ and $\| F \|_{C^2}$ such that for each $\ell =1, \ldots, n$,
$$\| \lambda_{\ell} \|_{L^q(B_{1/2})}  \le C_0 \inf_{B_{1/2}} \lambda_{\ell},$$
where $B_{1/2}=B_{1/2}(0)$ is the ball in $\mathbb{R}^n$ centered at $0$ of radius $1/2$. 
\end{theorem}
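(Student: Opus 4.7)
The plan is to prove the theorem by induction on $\ell$, applying the classical Krylov–Safonov weak Harnack inequality to the sum $\sigma_\ell := \lambda_1 + \cdots + \lambda_\ell$ of the $\ell$ smallest eigenvalues of $D^2u$. This $\sigma_\ell$ is non-negative (by convexity of $u$), concave as a function of the symmetric matrix $D^2u$ (via the standard variational characterization as a minimum of traces over orthonormal $\ell$-frames), and Lipschitz on $B$ since $D^2u \in C^1$. The linearized operator $L := F^{kl}(D^2u, Du, u, x)\,\partial_k \partial_l$ is uniformly elliptic by (\ref{ellipticitycon}), so Krylov–Safonov applies as soon as one establishes a supersolution-type differential inequality for $\sigma_\ell$ almost everywhere.

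The heart of the argument is the pointwise computation. At a point $x_0$ where $D^2u(x_0)$ has simple eigenvalues, diagonalize $D^2u(x_0)$ and use standard perturbation theory:
$$\partial_k \partial_k \lambda_i \;=\; u_{iikk} + 2\sum_{j \ne i} \frac{u_{ijk}^2}{\lambda_i - \lambda_j}.$$
Summing over $i = 1, \ldots, \ell$, the off-diagonal terms with both $i,j \le \ell$ cancel pairwise by antisymmetry in $(i,j)$, while those with $i \le \ell < j$ are non-positive since $\lambda_i \le \lambda_j$. Contracting with $F^{kl}$ therefore gives $L\sigma_\ell \le \sum_{i=1}^\ell F^{kl} u_{klii}$ pointwise. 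Differentiating $F(D^2u, Du, u, x) = 0$ twice in each coordinate direction $e_i$ produces an expression for $\sum_{i=1}^\ell F^{kl}u_{klii}$ containing a quadratic form in the third derivatives of $u$, and Bian–Guan's convexity condition (\ref{convexcondition}) is precisely what controls the sign of this form. The output should be an a.e. inequality $L\sigma_\ell \le C(1 + \sigma_\ell) + (\text{terms involving } \lambda_1, \ldots, \lambda_{\ell-1})$. Applying Krylov–Safonov to the non-negative supersolution $\sigma_\ell$ yields $\|\sigma_\ell\|_{L^q(B_{1/2})} \le C\inf_{B_{1/2}}\sigma_\ell \le C\ell \inf_{B_{1/2}}\lambda_\ell$ (since $\sigma_\ell \le \ell\lambda_\ell$ pointwise). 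Writing $\lambda_\ell = \sigma_\ell - \sum_{i<\ell}\lambda_i$ and invoking the induction hypothesis $\|\lambda_i\|_{L^q(B_{1/2})} \le C\inf_{B_{1/2}}\lambda_i \le C\inf_{B_{1/2}}\lambda_\ell$ for $i < \ell$, the triangle inequality closes the induction.

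The main obstacle is the rigorous derivation of the differential inequality $L\sigma_\ell \le C(1+\sigma_\ell) + (\textrm{lower-order})$ in a sense strong enough for Krylov–Safonov. Two intertwined difficulties appear. First, the Bian–Guan quadratic-form manipulation must be adapted to a sum over the $\ell$ smallest indices; the indefinite cubic terms in $u_{klj}$ must be absorbed using the same algebraic cancellations that made the argument in \cite{SW} work for the linear combination $\lambda_\ell + 2\lambda_{\ell-1} + \cdots + \ell\lambda_1 = \sigma_1 + \cdots + \sigma_\ell$. Second, $\sigma_\ell$ is only Lipschitz, with failure of smoothness at eigenvalue crossings; while \cite{SW} circumvented this by polynomial approximation, here the authors advertise a direct a.e. computation. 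The perturbation formulas above are valid on the open set of simple spectrum, and $\sigma_\ell$ being a minimum of smooth linear functions is semi-concave, which is exactly the regularity that lets an a.e.\ pointwise inequality feed into Krylov–Safonov in the correct weak sense. Making this extension precise, together with the careful bookkeeping of off-diagonal $F^{kl}$ contributions when $F^{kl}$ and $D^2u$ are not simultaneously diagonalizable, is the technical linchpin of the proof.
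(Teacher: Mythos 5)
Your overall strategy — variational concavity of the sum of small eigenvalues, the perturbation formula for second derivatives, the Bian--Guan convexity condition to control the cubic form, and a weak Harnack inequality — matches the paper's framework, but there are two concrete gaps that prevent the plan from closing.

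First, the differential inequality you hope to obtain for $\sigma_\ell$ is not a pure supersolution inequality of the form needed for Krylov--Safonov. When you differentiate $F(D^2u,Du,u,x)=0$ twice and apply the Bian--Guan condition, the resulting inequality unavoidably contains \emph{first-order} terms in $D\lambda_1,\ldots,D\lambda_{\ell}$ (for instance, $F^{p_a}u_{a\alpha\alpha}$ becomes $F^{p_a}(\lambda_i)_a$ on the diagonal by the first-variation formula). These are not expressible as $b^i D_i\sigma_\ell$ and cannot be folded into the zeroth-order term $C\sigma_\ell$; they are also not a fixed $L^n$ function $f$ that one could pass through Harnack and then kill by induction, since $\|D\lambda_i\|_{L^n}$ is not controlled by $\inf\lambda_i$. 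So ``terms involving $\lambda_1,\ldots,\lambda_{\ell-1}$'' in your proposed inequality is not accurate, and the direct Harnack-plus-induction step breaks. The paper's essential device for this is to pass to $R=\sum_{k=1}^\ell (Q_k+\varepsilon)^{1/2}$: the concavity of the square root generates a good term $-c\sum_k(Q_k+\varepsilon)^{-3/2}|DQ_k|^2$, and a short inductive argument (Claim (4.1) in the paper) shows that the bad first-order contributions $\sum(Q_k+\varepsilon)^{-1/2}|D\lambda_i|$ are controlled by $\sum(Q_k+\varepsilon)^{-1/2}|DQ_k|$, which can then be absorbed by Young's inequality. Without some substitute for this regularization, Proposition \ref{lemmaweak} cannot be invoked.

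Second, two structural choices in your sketch undercut the absorption of the cubic terms. You drop the negative contributions $2\sum_{q>\ell}F^{ab}u_{q\alpha a}u_{q\alpha b}/(\lambda_\alpha-\lambda_q)$ when passing from $L\sigma_\ell$ to $\sum F^{ab}u_{\alpha\alpha ab}$; but these are exactly what is needed to absorb the strictly positive term $2F^{ar}A^{bs}X_{ab}X_{rs}=2\sum_q F^{ab}u_{qa\alpha}u_{qb\alpha}/\lambda_q$ that the Bian--Guan condition \eqref{keyco} puts on the wrong side. Keeping them is not optional. Moreover, replacing the weighted quantity $Q_\ell=\sigma_1+\cdots+\sigma_\ell$ by $\sigma_\ell$ alone loses the extra negative terms coming from $m<\ell$ (the range $\rho_m<q\le\rho_\ell$ in the paper's proof of Lemma \ref{lemmadi}); these are precisely what absorb the off-diagonal cubic terms $u_{\alpha\beta j}$ with $\lambda_\alpha\ne\lambda_\beta$, $\alpha<\beta\le\rho_\ell$. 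This is why the paper (following \cite{SW}) works with $Q_k$ rather than $\sigma_k$: the weighting is not cosmetic but is required for the algebraic cancellations to close. Finally, your treatment of the eigenvalue-crossing set is acknowledged as ``the technical linchpin'' but not carried out; the paper supplies a self-contained a.e.\ first- and second-variation argument (Lemmas \ref{lemmafir} and \ref{lemmala}) valid at points of twice-differentiability, in the spirit of Brendle--Choi--Daskalopoulos, that you would need to reproduce.
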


This implies in particular the constant rank theorem of Bian-Guan \cite{BG}:

\begin{corollary}
The Hessian $D^2u$ has constant rank in $B$.
\end{corollary}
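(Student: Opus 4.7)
The plan is to deduce constant rank from Theorem \ref{mainthm} by a connectedness argument, using the weak Harnack inequality in its critical case when the infimum on the right-hand side is zero. The continuity of the Hessian $D^2u$ (since $u\in C^3$) ensures that each of the ordered eigenvalues $\lambda_\ell$ is continuous on $B$, so for each $\ell$ the zero set
\[
E_\ell=\{x\in B:\lambda_\ell(x)=0\}
\]
is automatically closed in $B$. I will show $E_\ell$ is also open. Since $B$ is connected, this forces $E_\ell$ to be either $\emptyset$ or all of $B$. Because the rank of $D^2u(x)$ equals $\#\{\ell:\lambda_\ell(x)>0\}$ (using that $u$ is convex, so $\lambda_\ell\ge 0$), this immediately gives that the rank is the same at every point of $B$.

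To show $E_\ell$ is open, I would fix $x_0\in E_\ell$ and pick $\rho>0$ so that $B_\rho(x_0)\subset B$. Set $\tilde u(y)=\rho^{-2}u(x_0+\rho y)$ on $B_1(0)$, so that $D^2\tilde u(y)=D^2u(x_0+\rho y)$ and in particular the eigenvalues $\tilde\lambda_\ell$ agree with $\lambda_\ell$ at the corresponding points. Defining $\tilde F(A,p,v,y):=F(A,\rho p,\rho^2 v,x_0+\rho y)$, the function $\tilde u$ solves $\tilde F(D^2\tilde u,D\tilde u,\tilde u,y)=0$, and one checks that the Bian--Guan convexity condition (\ref{convexcondition}) and the ellipticity (\ref{ellipticitycon}) carry over to $\tilde F$ (the matrix-derivative $\tilde F^{ij}$ equals $F^{ij}$, and the convexity condition in $(A,u,x)$ is preserved by the affine change of variables). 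Hence Theorem \ref{mainthm} applies to $\tilde u$.

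Since $\tilde\lambda_\ell(0)=\lambda_\ell(x_0)=0$ and $\tilde\lambda_\ell\ge 0$, we have $\inf_{B_{1/2}(0)}\tilde\lambda_\ell=0$, so the weak Harnack inequality gives $\|\tilde\lambda_\ell\|_{L^q(B_{1/2}(0))}=0$. Thus $\tilde\lambda_\ell\equiv 0$ on $B_{1/2}(0)$ (continuity upgrades the a.e. statement), which translates back to $\lambda_\ell\equiv 0$ on $B_{\rho/2}(x_0)\subset E_\ell$. This shows $E_\ell$ is open and completes the argument.

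There is essentially no hard step here: the only thing to verify carefully is that rescaling preserves the Bian--Guan structure and ellipticity hypotheses so that Theorem \ref{mainthm} can be applied in a neighborhood of every interior point. Since we only use the vanishing case of the Harnack bound, the actual values of the constants $C_0$ and $q$ produced by the rescaled theorem are irrelevant; all that matters is that they are finite.
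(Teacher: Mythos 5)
Your argument is correct and is essentially the same as the paper's: both rest on applying Theorem \ref{mainthm} on rescaled balls to show that the eigenvalue zero sets (equivalently, the sets where $\operatorname{rank}(D^2u)\le k$) are open, paired with continuity of the ordered eigenvalues for closedness, and then connectedness of $B$. The paper phrases this directly in terms of the rank-level sets and simply says ``applying Theorem \ref{mainthm} on appropriately scaled balls,'' whereas you carefully write out the rescaled operator $\tilde F$ and check that the Bian--Guan and ellipticity conditions persist --- a worthwhile verification, but not a different route.
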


Indeed, applying Theorem \ref{mainthm} on appropriately scaled balls, the sets $$\{ x \in B \ | \ \textrm{rank}(D^2u(x)) \le k \}, \quad k=0,1,2, \ldots, n,$$ are open in $B$.  On the other hand the sets $\{ x \in B \ | \ \textrm{rank}(D^2u(x)) \ge k \}$ are open in $B$ by continuity of the eigenvalues  of $D^2u$.  A consequence is that the sets $$\{ x \in B \ | \ \textrm{rank}(D^2u(x)) =k \}$$ are open and closed in $B$, giving the corollary.

We now give an outline of the paper.  In Section \ref{sectionprelim} we recall some definitions and known results about semi-concave functions  and in particular we provide a proof of the semi-concavity of the sum of the first $k$ eigenvalues of $D^2u$ for $u$ in $C^4$.  We also give a version of the weak Harnack inequality for subsolutions of elliptic equations.  

In Section \ref{section3}, under the assumption that $u$ is in $C^4$, we prove that the key differential inequality
\begin{equation} \label{kdi}
F^{ab} Q_{ab} \le CQ + \sum_{i=1}^k b^{i,j} (\lambda_i)_j, \quad \textrm{for } Q=Q_k=\lambda_k + 2\lambda_{k-1} + \cdots + k \lambda_1,
\end{equation}
holds almost everywhere, where  $C$ and $b^{i,j}$ are bounded.  This improves on the analogous result in \cite{SW} where the inequality is proved for approximating polynomials.  We note that the method of Section \ref{section3} includes proofs of a first variation formula for $\lambda_i$ (Lemma \ref{lemmafir}) and a second variation inequality (Lemma \ref{lemmala}), which hold almost everywhere. 

In Section \ref{section4} we complete the proof of Theorem \ref{mainthm}.
We cannot directly apply the weak Harnack inequality to $Q$ satisfying (\ref{kdi}) because its right hand side includes derivatives of $\lambda_1, \ldots, \lambda_k$.  We get around this difficulty by considering a new quantity
$$R = \sum_{k=1}^{\ell} (Q_k +\ve)^{1/2}, \quad \textrm{for } \ve>0.$$
By exploiting the concavity of the square root function, $R$ is shown to satisfy the differential inequality
$$F^{ab} R_{ab} \le C R,$$
almost everywhere.  We apply the weak Harnack inequality to $R$ and then let $\ve \rightarrow 0$ to obtain Theorem \ref{mainthm}.

\section{Preliminaries} \label{sectionprelim}

In this section we collect some elementary and well-known results which we will need in the sequel.

\subsection{Semi-concave functions} Let $U$ be a bounded convex subset of $\mathbb{R}^n$.  
A real-valued function $f$ on $U$  is \emph{semi-concave} if there exists a constant $M$ such that $g=f-M |x|^2$ is concave.   We call $M$ the semi-concavity constant for $f$ on $U$.  Observe that every function in $C^2(\ov{U})$ is automatically semi-concave.

Equivalently, a continuous function $f$ is semi-concave if for some $M'$ 
\begin{equation}\label{sc}
\frac{f(x) + f(y)}{2} - f\left( \frac{x+y}{2} \right) \le M' |x-y|^2, \quad \textrm{for all } x,y \in U.
\end{equation}

It is a classical result that a concave function $f$ is Lipschitz continuous and hence differentiable almost everywhere (we write its derivative as $Df(x)$ if it exists at $x$).  Moreover, by a theorem of Alexandrov, the second derivative of $f$ exists almost everywhere in the sense that there is a second order Taylor expansion at almost every $x$ (see for example \cite[Theorem 2.6.4]{BV}).  This holds too then for semi-concave functions $f$ on $U$.  More explicitly, at almost every $x \in U$, the derivative $Df(x)=(f_1(x),\ldots, f_n(x))$ exists and there is a symmetric matrix which we write as $D^2f(x)=(f_{ij}(x))$ such that
\begin{equation} \label{taylor}
f(y) = f(x) + f_i(x) (y-x)_i + \frac{1}{2} f_{ij}(x) (y-x)_i (y-x)_j +o(|y-x|^2), \quad \textrm{as } |y-x| \rightarrow 0,
\end{equation}
where as usual we are summing repeated indices from $1$ to $n$.

The following proposition is well-known (see for example \cite{CS}), but we include a brief proof for the reader's convenience.

\begin{proposition} \label{propsemi}
Let $u \in C^4(\ov{U})$, for a bounded convex set $U \subset \mathbb{R}^n$.  Denote by $\lambda_1(x) \le \cdots \le \lambda_n(x)$ the eigenvalues of the Hessian $D^2u(x)$.  Then for each $k=1, \ldots, n$ the map $\ov{U} \rightarrow \mathbb{R}$ given by
$$x \mapsto \lambda_1(x) + \cdots + \lambda_k(x)$$
is semi-concave.
\end{proposition}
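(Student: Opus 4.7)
The plan is to prove semi-concavity by writing $\lambda_1(x) + \cdots + \lambda_k(x)$ as an infimum of a family of $C^2$ functions of $x$ whose second derivatives are uniformly bounded. The crucial tool is Ky Fan's variational characterization: for any symmetric matrix $A$, the sum of its $k$ smallest eigenvalues is
$$\lambda_1(A) + \cdots + \lambda_k(A) = \inf_{(v_1,\ldots,v_k)} \sum_{i=1}^k \langle A v_i, v_i \rangle,$$
where the infimum is taken over all orthonormal $k$-tuples $(v_1, \ldots, v_k)$ in $\mathbb{R}^n$. (Equivalently, this is the infimum of $\operatorname{tr}(PAP)$ over rank-$k$ orthogonal projections $P$.) In particular, $A \mapsto \lambda_1(A) + \cdots + \lambda_k(A)$ is concave as a function of $A$, being the infimum of linear functionals.

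Applying this pointwise, for $u \in C^4(\overline{U})$ we have
$$\lambda_1(x) + \cdots + \lambda_k(x) = \inf_{(v_1, \ldots, v_k)} \sum_{i=1}^k u_{pq}(x)\, v_i^p v_i^q.$$
For each fixed orthonormal tuple $(v_1, \ldots, v_k)$, the function $h_{(v_i)}(x) := \sum_{i=1}^k u_{pq}(x)\, v_i^p v_i^q$ lies in $C^2(\overline{U})$ with Hessian
$$(h_{(v_i)})_{ab}(x) = \sum_{i=1}^k u_{pqab}(x)\, v_i^p v_i^q.$$
Since $\overline{U}$ is compact and $u \in C^4(\overline{U})$, and since each $v_i$ is a unit vector, there is a constant $M$ depending only on $\|u\|_{C^4(\overline{U})}$, $n$ and $k$ such that the $C^2$ norm of $h_{(v_i)}$ is bounded by $M$ uniformly in the choice of orthonormal tuple. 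In particular, the Hessian of $h_{(v_i)}(x) - M|x|^2$ is negative semidefinite at every $x \in \overline{U}$, so $h_{(v_i)}(x) - M|x|^2$ is concave on the convex set $\overline{U}$.

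Taking the infimum over $(v_1, \ldots, v_k)$ preserves concavity, so
$$g(x) := \lambda_1(x) + \cdots + \lambda_k(x) - M|x|^2 = \inf_{(v_1, \ldots, v_k)} \bigl( h_{(v_i)}(x) - M|x|^2 \bigr)$$
is concave on $\overline{U}$. This is exactly the definition of semi-concavity with constant $M$, completing the proof.

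There is really no substantial obstacle here; the only minor point requiring care is the uniformity of the $C^2$ bound for $h_{(v_i)}$ over all orthonormal tuples, but this follows immediately from the compactness of the set of orthonormal $k$-tuples in $(\mathbb{R}^n)^k$ together with the $C^4$ regularity of $u$ on the compact set $\overline{U}$.
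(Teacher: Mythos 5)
Your proof is correct and rests on the same key idea as the paper's proof, namely the Ky Fan variational characterization of $\lambda_1 + \cdots + \lambda_k$ as an infimum of linear functionals over orthonormal $k$-tuples. The organization is somewhat different, though, and arguably cleaner. The paper factors the argument into two pieces: it first proves that the matrix function $\sigma(A) = \lambda_1(A) + \cdots + \lambda_k(A)$ is increasing, concave, and Lipschitz; it then uses $u \in C^4$ to get the matrix-valued semi-concavity inequality $\tfrac{1}{2}(D^2u(x) + D^2u(y)) - D^2u(\tfrac{x+y}{2}) \le M|x-y|^2\,\mathrm{Id}$, and combines these via the midpoint formulation (\ref{sc}) of semi-concavity. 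Your version skips the intermediate properties of $\sigma$: for each fixed orthonormal tuple you bound $\|h_{(v_i)}\|_{C^2}$ uniformly (using compactness of the Stiefel manifold and $u \in C^4(\overline U)$), conclude that $h_{(v_i)}(x) - M|x|^2$ is concave for a uniform $M$, and take the infimum. This avoids needing the Lipschitz bound on $\sigma$ at all, at the minor cost of a sign/factor check (the Hessian of $M|x|^2$ is $2M\,\mathrm{Id}$, so strictly one should subtract $M|x|^2$ with $M$ at least half the uniform $C^2$ bound of the $h_{(v_i)}$). Both routes give the same constant-quality conclusion; yours is a bit more direct, the paper's isolates the reusable properties of $\sigma$.
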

\begin{proof}
We claim the following:
the map $\sigma:\text{Sym}_n(\mathbb{R}) \rightarrow \mathbb{R}$ given by  $$\sigma(A)=  \lambda_1(A)+ \cdots + \lambda_k(A)$$ is increasing and concave on $\text{Sym}_n(\mathbb{R})$, and is Lipschitz continuous with Lipschitz constant depending only on $n$ and $k$.  To see the claim, note that
given fixed unit vectors $V_1, \ldots, V_k \in \mathbb{R}^n$, the function
 $$A \mapsto \sum_{\alpha=1}^k A_{ij} V_{\alpha}^i V_{\alpha}^j,$$
 is linear, increasing and has bounded Lipschitz constant depending only on $n$ and $k$.  Here we are writing $V_{\alpha}= (V_{\alpha}^1, \ldots, V_{\alpha}^n)$ and $A=(A_{ij})_{i,j=1}^n$.
 But we can define
 $$\sigma(A) = \inf \left\{ \sum_{\alpha=1}^k A_{ij} V_{\alpha}^i V_{\alpha}^j \ | \ V_1, \ldots, V_k \textrm{ are orthonormal} \right\}.$$
The map $\sigma$ is clearly increasing, and it is concave since the infimum of concave functions is
  concave.  Moreover, it is an elementary fact that for any normed vector space $(X, \| \cdot \|)$, if $f_{s}: X \rightarrow \mathbb{R}$, for $s\in S$, is a family of functions which are uniformly Lipschitz continuous:
$$|f_{s}(x) - f_{s}(y)| \le C\|x-y\|, \quad x,y \in X$$
 then $f := \min_{s \in S} f_{s}$, assuming the minimum is attained at each point and is finite, is also Lipschitz continuous with the same constant $C$.  The claim follows.

For $x, y \in \ov{U}$, using concavity of $\sigma$, 
\[
\begin{split}
\lefteqn{\frac{\sigma(D^2u(x))+\sigma(D^2u(y))}{2} - \sigma \left( D^2u \left( \frac{x+y}{2} \right)  \right) } \\ \le {} & \sigma \left( \frac{D^2u(x) + D^2u(y)}{2} \right) - \sigma \left( D^2u \left( \frac{x+y}{2} \right)  \right).
\end{split}
\]
But then since $u$ is in $C^4(\ov{U})$ we have that $D_{ij} u$ is semi-concave for every $i,j$ and hence 
$$\frac{D_{ij}u(x) + D_{ij}u(y)}{2} - D_{ij}u\left( \frac{x+y}{2} \right) \le  M|x-y|^2.$$
Then, increasing $M$ if necessary, we have the following inequality of symmetric matrices:
$$\frac{D^2u(x) + D^2u(y)}{2} - D^2u\left( \frac{x+y}{2} \right) \le  M|x-y|^2 \textrm{Id}.$$
Since $\sigma$ is increasing and Lipschitz continuous,
\[
\begin{split}
\lefteqn{\frac{\sigma(D^2u(x))+\sigma(D^2u(y))}{2} - \sigma \left( D^2u \left( \frac{x+y}{2} \right)  \right) } \\ \le {} & \sigma \left( D^2u\left( \frac{x+y}{2} \right) + M|x-y|^2 \textrm{Id} \right) - \sigma \left( D^2u \left( \frac{x+y}{2} \right)  \right) \\
\le {} & C |x-y|^2,
\end{split}
\]
completing the proof of the proposition.
\end{proof}

We end the subsection with an elementary proposition.

\begin{proposition} \label{comp} Let $U$ be a bounded convex  set in $\mathbb{R}^n$ and $V$ an open interval in $\mathbb{R}$.  Suppose that  $f: U \rightarrow V$ is semiconcave and $h: V\rightarrow \mathbb{R}$ is increasing, Lipschitz continuous and concave.  Then $h \circ f: U \rightarrow \mathbb{R}$ is semiconcave.
\end{proposition}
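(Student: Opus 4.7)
The plan is to establish the midpoint form \eqref{sc} of semiconcavity for $h \circ f$ by chaining together the three hypotheses on $h$. To avoid the nuisance that intermediate arguments of $h$ might fall outside the open interval $V$, I would first extend $h$ to all of $\mathbb{R}$ as an increasing, concave, $L$-Lipschitz function, where $L$ is the Lipschitz constant of $h$ on $V$. Concretely, on an unbounded component of $\mathbb{R}\setminus \overline{V}$ lying to the left of $V$ attach an affine piece of slope $L$, and on such a component to the right attach a constant. The slopes of the extended function are then in $[0,L]$ and nonincreasing on $\mathbb{R}$, giving the three desired properties.

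Let $M$ denote the semiconcavity constant of $f$. Given $x,y\in U$, apply \eqref{sc} to $f$ to obtain
\[
f\!\left(\tfrac{x+y}{2}\right) \ge \tfrac{f(x)+f(y)}{2} - M|x-y|^2,
\]
then apply the (extended) $h$, which is monotone, and use the $L$-Lipschitz bound to move the error term outside:
\[
h\!\left(\tfrac{f(x)+f(y)}{2} - M|x-y|^2\right) \ge h\!\left(\tfrac{f(x)+f(y)}{2}\right) - LM|x-y|^2.
\]
Finally, two-point concavity of $h$ gives $h\!\left(\tfrac{f(x)+f(y)}{2}\right) \ge \tfrac{h(f(x))+h(f(y))}{2}$. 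Combining these four steps yields
\[
\tfrac{(h\circ f)(x) + (h\circ f)(y)}{2} - (h\circ f)\!\left(\tfrac{x+y}{2}\right) \le LM\,|x-y|^2,
\]
which is precisely \eqref{sc} for $h\circ f$ with constant $LM$. Continuity of $h\circ f$, which is needed to invoke \eqref{sc}, follows from the Lipschitz property of $h$ together with the continuity of the semiconcave function $f$.

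There is no serious obstacle. The only point that requires attention is the potential domain issue in the Lipschitz step; once the preliminary extension of $h$ is in place, each step in the chain of inequalities is a direct application of exactly one of the three hypotheses on $h$, together with the semiconcavity of $f$.
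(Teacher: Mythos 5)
Your proof is correct and follows essentially the same route as the paper: chain the midpoint characterization \eqref{sc} of semiconcavity for $f$ with the concavity, monotonicity, and Lipschitz continuity of $h$. The one addition — extending $h$ off $V$ so that the intermediate argument $\tfrac{f(x)+f(y)}{2}-M|x-y|^2$ stays in the domain — is a careful touch that the paper's proof glosses over, but the core chain of three inequalities is the same.
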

\begin{proof}
We have for $x,y \in U$,
\[
\begin{split}
\lefteqn{\frac{h(f(x)) + h(f(y))}{2} - h \left( f \left(\frac{x+y}{2} \right) \right) } \\ \le {} & h \left( \frac{f(x)+f(y)}{2} \right) - h \left( f \left(\frac{x+y}{2} \right) \right) \qquad \textrm{($h$ concave)}\\
\le {} & h\left( f\left( \frac{x+y}{2} \right) + M|x-y|^2 \right) - h \left( f \left(\frac{x+y}{2} \right) \right)  \qquad \textrm{($f$ semiconcave, $h$ increasing)} \\
\le {} & C |x-y|^2, \qquad \textrm{($h$ Lipschitz)}
\end{split}
\]
as required.
\end{proof}

\subsection{The weak Harnack inequality}

We state a weak Harnack inequality for semi-concave functions.  It follows by approximation from the classical weak Harnack inequality for functions in $W^{2,n}$ (see \cite[Theorem 9.22]{GT}).  Similar statements, with slightly different hypotheses, can be found in \cite{Tr} and \cite{SW}.

\begin{proposition} \label{lemmaweak}
Consider the operator $Lv = a^{ij} D_{ij} v + b^i D_i v + cv$ with bounded coefficients on the unit ball $B \subset \mathbb{R}^n$ and with $a^{ij}$ satisfying the uniform ellipticity condition $\Lambda^{-1}|\xi|^2 \le a^{ij} \xi_i \xi_j \le \Lambda |\xi|^2$ for all $\xi \in \mathbb{R}^n$ for   $\Lambda>0$.  Let $v$ be a semi-concave nonnegative function on  $B$ satisfying $Lv \le f$ almost everywhere in $B$ for $f \in L^n(B)$.  Then on the half size ball $B'$,
\begin{equation} \label{v}
\left( \frac{1}{|B'|} \int_{B'} v^q \right)^{1/q} \le C  \left( \inf_{B'} v + \| f \|_{L^n(B)} \right),
\end{equation}
for positive constants $C$ and $q$ depending only on $n, \Lambda,$ bounds for $b^i$ and $c$ and the radius of the ball $B$.
\end{proposition}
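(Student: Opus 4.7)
My plan is to reduce Proposition \ref{lemmaweak} to the classical weak Harnack inequality for strong $W^{2,n}$ solutions, namely Theorem 9.22 of \cite{GT}, via mollification. Let $\rho_\varepsilon$ denote a standard symmetric mollifier on $\mathbb{R}^n$ and set $v_\varepsilon = v * \rho_\varepsilon$. Since $v$ is semi-concave and nonnegative, $v_\varepsilon$ is smooth, nonnegative on $B_{1-\varepsilon}$, semi-concave with a uniform constant, and converges uniformly to $v$ on compact subsets; in particular $v_\varepsilon \in W^{2,n}_{\mathrm{loc}}$.

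The heart of the argument is to derive a differential inequality $L v_\varepsilon \le \tilde{f}_\varepsilon$ almost everywhere with $\tilde{f}_\varepsilon \to f$ in $L^n$. Viewing the distributional Hessian of $v$ as a matrix-valued Radon measure $\mu$, semi-concavity gives the matrix inequality $\mu \le C\,\mathrm{Id}$, so the singular part $\mu_s$ with respect to Lebesgue measure is $\le 0$. Using $\rho_\varepsilon \ge 0$ and the positive definiteness of $a^{ij}(x)$, this yields
$$a^{ij}(x)\, D_{ij} v_\varepsilon(x) \;\le\; \int a^{ij}(x)\, D_{ij} v(y)\, \rho_\varepsilon(x-y)\, dy,$$
where $D_{ij} v$ on the right denotes the pointwise Hessian guaranteed by Alexandrov's theorem. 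Adding and subtracting $a^{ij}(y)$ inside the integrand, invoking the a.e.\ inequality $a^{ij}(y) D_{ij} v(y) \le f(y) - b^i(y) D_i v(y) - c(y) v(y)$, and handling the $b^i$ and $c$ terms analogously, one arrives at
$$L v_\varepsilon \;\le\; f * \rho_\varepsilon \;+\; E_\varepsilon,$$
where $E_\varepsilon$ collects the commutators of the coefficients with the convolution. Applying Theorem 9.22 of \cite{GT} to $v_\varepsilon$ on $B_{1/2}$ and letting $\varepsilon \to 0$, dominated convergence handles the $L^q$ integral on the left-hand side, uniform convergence $v_\varepsilon \to v$ converts $\inf v_\varepsilon$ to $\inf v$, and $\|f * \rho_\varepsilon\|_{L^n} \to \|f\|_{L^n}$.

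The main technical obstacle is showing $\|E_\varepsilon\|_{L^n(B)} \to 0$. The $b^i$ and $c$ commutators vanish in $L^n$ by the standard commutator lemma for mollification, using boundedness of $v$ and $Dv$ (which lies in $L^\infty$ since a semi-concave function is Lipschitz on interior subsets). The $a^{ij}$-commutator $\int (a^{ij}(x) - a^{ij}(y)) D_{ij} v(y) \rho_\varepsilon(x-y)\,dy$ is the more delicate part: it vanishes once one has a modulus of continuity for $a^{ij}$, which holds in the applications of this proposition within the paper where $a^{ij}$ arises as $F^{ij}$ evaluated at $D^2 u \in C^1$. Combined with the $L^1$ bound on the absolutely continuous part of $\mu$ coming from $\mu \le C\,\mathrm{Id}$, this delivers uniform (hence $L^n$) convergence of the commutator to zero.
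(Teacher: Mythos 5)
Your overall strategy — mollify $v$, derive a differential inequality for $v_\ve$, and pass to the limit in the classical weak Harnack inequality of \cite[Theorem 9.22]{GT} — is the same as the paper's, and your observation that the singular part of the Hessian measure is $\le 0$ is correct. But the specific mechanism you propose for controlling $Lv_\ve$, namely the coefficient commutator
$$E_\ve(x) = \int \bigl(a^{ij}(x)-a^{ij}(y)\bigr) D_{ij}v(y)\,\rho_\ve(x-y)\,dy,$$
has a genuine gap. The distributional Hessian of a semi-concave function is a matrix-valued Radon measure whose absolutely continuous density (the Alexandrov Hessian) is bounded \emph{above} by $C\,\mathrm{Id}$ and locally of finite total variation, but whose negative part is only in $L^1_{\mathrm{loc}}$, not in $L^n$. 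Thus $|E_\ve(x)| \le \omega(\ve)\,(|D^2v|*\rho_\ve)(x)$ gives $\|E_\ve\|_{L^1}\to 0$, but not uniform convergence and not $L^n$ convergence: the negative part of $|D^2v|*\rho_\ve$ can concentrate, and the $L^1$ bound does not improve to $L^n$ by modulus-of-continuity considerations alone. Your claim that the commutator converges ``uniformly (hence $L^n$)'' does not follow. There is a secondary issue: the proposition assumes only \emph{bounded} $a^{ij}$, while your argument needs a modulus of continuity; you acknowledge this, but it means the proof as stated does not prove the proposition.

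The paper sidesteps both problems with an Egorov argument that never attempts to control the commutator. Since $D^k v_\ve \to D^k v$ a.e.\ for $k=1,2$, Egorov gives a set $K_\delta$ with $|\tilde B\setminus K_\delta|\le \delta$ on which the convergence is uniform, so $Lv_\ve \le f+\delta$ on $K_\delta$ for $\ve$ small. On the exceptional set $\tilde B\setminus K_\delta$ one uses only the one-sided bound $D^2 v_\ve \le C\,\mathrm{Id}$ (inherited from semi-concavity of $v$ under convolution) together with $a^{ij}\ge 0$ and boundedness of $Dv_\ve, v_\ve$ to get $Lv_\ve \le M$ pointwise. Since $|\tilde B\setminus K_\delta|$ is small, this contributes at most $CM\delta^{1/n}$ in $L^n$. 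This is exactly where semi-concavity is used, and it requires nothing from $a^{ij}$ beyond boundedness and ellipticity. To repair your proof you would either need to establish $L^n$ control on the negative part of the Alexandrov Hessian (which is false in general) or replace the commutator estimate with the Egorov/small-exceptional-set argument.
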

\begin{proof}  We include a brief proof for the sake of completeness.  Let $\tilde{B}$ be a ball such that $B' \subset \subset \tilde{B} \subset \subset B$.
Since $v$ is semi-concave, it is Lipschitz continuous and twice differentiable almost everywhere.  For $\ve>0$ let $v_{\ve}$ be a standard mollification of $v$.  Then $v_{\ve} \rightarrow v$ uniformly and $D^kv_{\ve} \rightarrow D^kv$ for $k=1,2$ almost everywhere on $\tilde{B}$ as $\ve \rightarrow 0$ (the second assertion is a direct consequence of the expansion (\ref{taylor})).  Let $\delta>0$ be given.  Then by Egorov's Theorem there exists a set $K_{\delta} \subset \tilde{B}$ such that $|\tilde{B}\setminus K_{\delta}| \le \delta$ and 
 $D^k v_{\ve} \rightarrow D^kv$  uniformly on $K_{\delta}$ for $k=1,2$.  Then we may choose $\ve>0$ sufficiently small so that 
 $$L v_{\ve} = Lv + L(v_{\ve}-v) \le f+ \delta, \qquad \textrm{almost everywhere on $K_{\delta}$}.$$
 On the other hand, since $v$ is semi-concave we have an upper bound for $D^2 v_{\ve}$ and hence on $\tilde{B} \setminus K_{\delta}$ 
 $$L v_{\ve} \le M,$$
 for a constant $M \ge 1$ depending on the $C^0$ norm, semi-concavity constant and Lipschitz bound of $v$ as well as bounds on the coefficients of $L$.
 It follows that almost everywhere on $\tilde{B}$ we have
 $$L v_{\ve}\le f +g,$$
 for a function $g \in L^{\infty}(\tilde{B})$ with $\| g \|_{L^n(\tilde{B})} \le CM \delta^{1/n}$.   Then we apply \cite[Theorem 9.22]{GT}  to the smooth nonnegative function $v_{\ve}$ to obtain
 $$\left( \int_{B'} v_{\ve}^q \right)^{1/q} \le C \inf_{B'} v_{\ve} + C \| f +g\|_{L^n(\tilde{B})} \le C\left(  \inf_{B'} v_{\ve} + \|f \|_{L^n(B)} +M \delta^{1/n} \right),$$
 for uniform positive constants $C, q$.  Then let $\delta\rightarrow 0$, so that in addition $\ve \rightarrow 0$ and we obtain (\ref{v}).
\end{proof}

We emphasize that the constants $C, q$ in the above proposition are independent of the semi-concavity constant of $v$.

\section{A differential inequality} \label{section3}

As in the introduction, let $u$ solve the equation \eqref{maineqn} subject to the conditions 
 \eqref{convexcondition} and \eqref{ellipticitycon}.  In this section, we make the following: 

\medskip
\noindent
{\bf Additional assumption:} $u \in C^4(B).$

\medskip

 Let $0\le \lambda_1 \le \cdots \le \lambda_n$ be the eigenvalues of $D^2u$.
By Proposition \ref{propsemi}, the map $x \mapsto \lambda_1 + \cdots + \lambda_k$ is semi-concave on compact convex subsets of $B$.  It follows that $\lambda_1, \ldots, \lambda_n$ are twice differentiable almost everywhere on $B$.  
The goal of this section is the following differential inequality.

\begin{lemma} \label{lemmadi} For $1 \le k \le n$,  define 
$$Q := Q_k:= \lambda_k + 2 \lambda_{k-1} + \cdots + k \lambda_1.$$
Then
\begin{equation} \label{claimotro}
F^{ab} Q_{ab} \le C Q + \sum_{i=1}^k b^{i,j} (\lambda_i)_j, \quad \textrm{almost everywhere on $B$,}
\end{equation}
where $C$ is a uniform constant and the $b^{i,j}$ are uniformly bounded functions on $B$.
\end{lemma}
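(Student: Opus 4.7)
I would prove (\ref{claimotro}) pointwise at any $x_0 \in B$ at which each eigenvalue function $\lambda_1,\ldots,\lambda_n$ admits the second-order Taylor expansion (\ref{taylor}); by Proposition \ref{propsemi} such points form a set of full measure in $B$.  After fixing an orthogonal rotation that diagonalizes $D^2 u(x_0)$, I decompose
$$Q_k \;=\; \sum_{m=1}^{k} S_m, \qquad S_m := \lambda_1 + \cdots + \lambda_m,$$
and prove the corresponding inequality for each $F^{ab}(S_m)_{ab}$ separately, summing over $m$ at the end.  The virtue of this decomposition is that the variational characterization of $S_m$ used in Proposition \ref{propsemi} is exact, yielding clean first- and second-variation formulas whose only singular terms involve index pairs that straddle $m$.

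\textbf{Steps.}  First I would prove a first-variation formula: at $x_0$ in the diagonalizing frame, $(S_m)_a(x_0) = \sum_{i=1}^m u_{ii,a}(x_0)$.  This follows from the infimum representation of $S_m$ used in Proposition \ref{propsemi} combined with (\ref{taylor}), since the infimum at $x_0$ is attained by the standard basis $e_1,\ldots,e_m$.  A more delicate analysis, obtained by inserting a suitable varying orthonormal frame into the infimum and optimizing the perturbation against the orthonormality constraint, yields the second-variation inequality
$$(S_m)_{ab}(x_0) \;\le\; \sum_{i=1}^{m} u_{ii,ab}(x_0) \;-\; 2 \sum_{\substack{i\le m\\ p>m}} \frac{u_{ip,a}(x_0)\,u_{ip,b}(x_0)}{\lambda_p-\lambda_i}$$
as symmetric bilinear forms; the cross terms with $i,p \le m$ vanish by antisymmetry in the swap $i \leftrightarrow p$, and the displayed singular terms are manifestly nonpositive since $\lambda_p \ge \lambda_i$ when $p>m\ge i$ (when $\lambda_p = \lambda_i$, the existence of the expansion (\ref{taylor}) for $S_m$ forces $u_{ip,a}(x_0)=0$, so the quotient is interpreted as $0$).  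Next, differentiating (\ref{maineqn}) twice in $e_i$ and contracting with $F^{ab}$ gives
$$F^{ab} u_{ii,ab} \;=\; -F^{ab,cd} u_{ab,i} u_{cd,i} \;+\; O(1) \;+\; \tilde{b}^{i,j}(\lambda_i)_j,$$
after using the first-variation formula $u_{ii,j} = (\lambda_i)_j$ to absorb the lower-order $F^{p_j}, F^u, F^{x_j}$ contributions into bounded multiples of $(\lambda_i)_j$.  The Bian--Guan convexity hypothesis (\ref{convexcondition}), applied at the diagonalized Hessian, supplies the standard upper bound on $-F^{ab,cd}u_{ab,i}u_{cd,i}$ whose only singular contributions are of the form $\lambda_q^{-1} F^{pp} u_{pq,i}^2$ with a favorable sign.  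Combining these ingredients with the $F^{ab}$-trace of the second-variation inequality for $S_m$ and summing over $m=1,\ldots,k$, the two families of singular terms cancel by the combinatorial identity underlying the weights $1,2,\ldots,k$ in (\ref{simple}), leaving $F^{ab}Q_{k,ab} \le C Q + \sum_{i\le k} b^{i,j}(\lambda_i)_j$ as required.

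\textbf{Main obstacle.}  The heart of the argument is this cancellation: both families of singular terms (those involving $\lambda_q^{-1}$ from Bian--Guan and those involving $(\lambda_p-\lambda_i)^{-1}$ from the second variation) individually blow up on the locus where the Hessian is degenerate, and only the precise coefficient arrangement produced by $Q_k = \sum_m S_m$ allows them to cancel algebraically; this is the same combinatorial core as the argument of \cite{SW}.  A secondary difficulty is executing the first- and second-variation analyses a.e.\ for sums of the lowest $m$ eigenvalues \emph{without} the polynomial-approximation device of \cite{SW}.  This is resolved by invoking only the pointwise expansion (\ref{taylor}) together with the infimum formula for $S_m$, which yield the required identities and inequalities at every point of twice differentiability.
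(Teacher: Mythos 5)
Your proposal is essentially the paper's proof: you decompose $Q_k = \sum_{m=1}^k S_m$ with $S_m = \lambda_1 + \cdots + \lambda_m$, work at Alexandrov points in a diagonalizing frame, establish a first-variation identity and a second-variation inequality for $S_m$ via the infimum characterization (the paper does this in Lemmas \ref{lemmafir} and \ref{lemmala}, adapting the frame-perturbation technique of Brendle--Choi--Daskalopoulos), differentiate the equation twice, invoke the Bian--Guan convexity condition \eqref{keyco}, and sum over $m$ so the good negative terms from the second variation absorb the positive terms coming from \eqref{keyco} and from off-diagonal third derivatives. The only cosmetic difference is your second-variation inequality sums over $p>m$ rather than over $p>\rho$ (where $\rho$ is the top of the eigenvalue block containing $\lambda_m$, as in the paper); your version is a slightly tighter but still valid inequality, obtainable by the same frame perturbation with the natural $0/0$ interpretation you describe, and either form works in the sequel.
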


In the above, ``uniform'' means that the constants depend only on $n$, $\Lambda$, $\| u \|_{C^3(B)}$ and $\| F \|_{C^2}$.  In particular, the constants do not depend on a $C^4$ bound for $u$. 

To establish the lemma, we prove two rather general results about the first and second derivatives of the eigenvalues $\lambda_i$, which will hold almost everywhere.  In the case when the eigenvalues $\lambda_i$ are all distinct, there are well-known formulae for their first and second derivatives (see for example \cite{Sp}).  To deal with eigenvalues with multiplicity we adapt an approach  of Brendle-Choi-Daskalopoulos \cite[Lemma 5]{BCD} where similar statements to Lemmas \ref{lemmafir} and \ref{lemmala} below are proved for $\lambda_1$.   Crucially, these lemmas only hold at a point where the eigenvalues are twice differentiable.

Fix an $x_0$ at which the $\lambda_i$ are twice differentiable, and choose coordinates at $x_0$ such that $D^2u$ is diagonal with entries $u_{ii} = \lambda_i$.    Moreover, we suppose that $N$ of the $\lambda_i$'s are distinct at $x_0$, and define $1 \le \mu_1 < \ldots < \mu_N=n$ by
$$\lambda_1  = \cdots = \lambda_{\mu_1} < \lambda_{1+\mu_1} = \cdots = \lambda_{\mu_2} < \lambda_{1+\mu_2} = \cdots \cdots = \lambda_{\mu_N}=\lambda_n.$$
We also define $\mu_0=0$ so that the multiplicities of the eigenvalues of $D^2u$ at $x_0$ are $\mu_1-\mu_0, \mu_2-\mu_1, \ldots, \mu_N-\mu_{N-1}$.

\begin{lemma} \label{lemmafir} For each  $j=1, 2, \ldots, N$ we have at $x_0$,
\begin{equation} \label{fir}
u_{k\ell i} = (\lambda_{1+\mu_{j-1}})_i \delta_{k\ell}, \qquad \textrm{for } 1+\mu_{j-1} \le k, \ell \le \mu_{j}.
\end{equation}
for $i=1, 2, \ldots, n$.
\end{lemma}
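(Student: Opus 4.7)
The plan is to use the variational (min-max) characterization of the sum of the smallest eigenvalues, recalled in the proof of Proposition \ref{propsemi}, together with the hypothesis that each $\lambda_a$ is twice differentiable at $x_0$. For each $j$ and each unit vector $v$ in the block eigenspace $E_j := \mathrm{span}(e_{1+\mu_{j-1}}, \ldots, e_{\mu_j})$ of $D^2u(x_0)$, I will build a nonnegative comparison function $\psi_v$ that vanishes at $x_0$; vanishing of its first-order Taylor coefficient, as $v$ varies over unit vectors in $E_j$, will force the diagonal block of $[u_{k\ell i}(x_0)]$ corresponding to $E_j$ to be a scalar multiple of the identity.

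Concretely, I take the $(1+\mu_{j-1})$-dimensional subspace $V_v := \mathrm{span}(e_1, \ldots, e_{\mu_{j-1}}, v)$. The min-max formula gives, for every $x \in B$,
$$\lambda_1(x) + \cdots + \lambda_{1+\mu_{j-1}}(x) \le \mathrm{tr}_{V_v}(D^2u(x)) = \sum_{a=1}^{\mu_{j-1}} u_{aa}(x) + v^k v^\ell u_{k\ell}(x).$$
At $x_0$, $V_v$ is spanned by eigenvectors for the $(1+\mu_{j-1})$ smallest eigenvalues of $D^2u(x_0)$, so equality holds there. Hence
$$\psi_v(x) := \sum_{a=1}^{\mu_{j-1}} u_{aa}(x) + v^k v^\ell u_{k\ell}(x) - \bigl(\lambda_1(x) + \cdots + \lambda_{1+\mu_{j-1}}(x)\bigr) \ge 0,$$
with $\psi_v(x_0) = 0$. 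Since $u \in C^4$ and each $\lambda_a$ is differentiable at $x_0$, $\psi_v$ is differentiable at $x_0$, and therefore $(\psi_v)_i(x_0) = 0$ for every $i$. Applying the same reasoning with the subspace $\mathrm{span}(e_1, \ldots, e_{\mu_{j-1}})$ in place of $V_v$ gives $\sum_{a=1}^{\mu_{j-1}} u_{aai}(x_0) = \sum_{a=1}^{\mu_{j-1}} (\lambda_a)_i(x_0)$; subtracting yields
$$v^k v^\ell u_{k\ell i}(x_0) = (\lambda_{1+\mu_{j-1}})_i(x_0) \qquad \text{for every unit } v \in E_j.$$

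Since the right-hand side is independent of $v$, the symmetric bilinear form $v \mapsto v^k v^\ell u_{k\ell i}(x_0)$ has constant Rayleigh quotient on $E_j$; by homogeneity and polarization, its restriction to $E_j$ equals $(\lambda_{1+\mu_{j-1}})_i(x_0)$ times the identity, which is precisely (\ref{fir}). The only subtle point is the differentiability bookkeeping: I must invoke the standing hypothesis that $x_0$ is a twice-differentiability point of every $\lambda_a$ in order to know that $\psi_v$ admits a genuine first-order Taylor expansion at $x_0$ and that its nonnegativity forces the linear term to vanish. The rest is linear algebra combined with the variational characterization already established for Proposition \ref{propsemi}.
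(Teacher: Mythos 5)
Your proof is correct, and it takes a genuinely different route from the paper. The paper proves \eqref{fir} by induction on $j$: the base case uses the single comparison function $h = u_{k\ell}V^kV^\ell - \lambda_1$, and the inductive step uses $h = \sum_{\alpha=1}^{1+\mu_p} u_{k\ell}V_\alpha^k V_\alpha^\ell - \sum_{j=1}^p(\mu_j-\mu_{j-1})\lambda_{1+\mu_{j-1}} - \lambda_{1+\mu_p}$, invoking the inductive hypothesis at lower indices to kill the contribution of the first $\mu_p$ directions in $h_i(x_0)$. You avoid the induction entirely by running the variational comparison twice --- once over $V_v=\mathrm{span}(e_1,\dots,e_{\mu_{j-1}},v)$ and once over $\mathrm{span}(e_1,\dots,e_{\mu_{j-1}})$ --- and subtracting; this isolates $v^kv^\ell u_{k\ell i}(x_0) = (\lambda_{1+\mu_{j-1}})_i(x_0)$ without needing to know the full structure of $u_{k\ell i}$ at lower indices (you only need the trace identity $\sum_{a\le\mu_{j-1}} u_{aai} = \sum_{a\le\mu_{j-1}}(\lambda_a)_i$, which comes for free). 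The polarization step is standard and correct. Your route is arguably cleaner since each block $E_j$ is handled independently, while the paper's route makes the recursive cancellation structure explicit; both rely on the same underlying device of a nonnegative function vanishing at the differentiability point $x_0$. One small remark: for this first-derivative lemma only first-order differentiability of the $\lambda_a$ at $x_0$ is actually needed; the paper phrases the hypothesis as twice-differentiability because Lemma \ref{lemmala} also requires it.
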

\begin{proof}
We prove this by induction on $j$.  We first prove the case $j=1$, which states that
\begin{equation} \label{first}
u_{k\ell i} = (\lambda_1)_i \delta_{k\ell}, \quad \textrm{for } 1 \le k, \ell \le \mu_1, \quad 1 \le i \le n.
\end{equation}

Let $V = (V^1, \ldots, V^n)$ be a constant unit vector field defined in a neighborhood of $x_0$.  Then by definition of $\lambda_1$, the function $h$ defined by
$$h:= u_{k\ell} V^k V^{\ell} - \lambda_1,$$
has $h(x) \ge 0$ for $x$ near $x_0$.  Choose $V$ with $V^k(x_0)=0$ for $k > \mu_1$ so that we have $h(x_0)=0$ and $h$ has a local minimum at $x_0$.  Moreover, $h$ is twice differentiable at $x_0$.

Then at $x_0$,
$$0=h_i = \sum_{k, \ell \le \mu_1} u_{k\ell i} V^k V^{\ell} - \sum_{k, \ell \le \mu_1} (\lambda_1)_i \delta_{k\ell} V^k V^{\ell},$$
using the fact that $V$ is a unit vector.  Then (\ref{first}) follows since we are free to choose $V^k(x_0)$ for $k \le \mu_1$.

For the inductive step, assume (\ref{fir}) holds for $1 \le j \le p$.   Let $V_1, \ldots, V_{\mu_{p}}$ be the constant unit vector fields in the $\partial/\partial x_1, \ldots, \partial/\partial x_{\mu_p}$ directions.  That is, writing $V_{\alpha}$ in component form as $V_{\alpha} = (V^1_{\alpha}, \ldots, V^n_{\alpha})$, we have $V_{\alpha}^q = \delta_{q\alpha}$.  Next let $W=V_{1+\mu_p}$ be an arbitrary constant unit vector field in the span of
the directions $\partial/\partial x_{1+\mu_p}, \ldots, \partial/\partial x_{\mu_{p+1}}$. 

 Consider the quantity
\[
\begin{split}
h= {} & \sum_{\alpha=1}^{1+\mu_p} u_{k\ell} V_{\alpha}^k V_{\alpha}^{\ell} - \sum_{j=1}^{p} (\mu_j-\mu_{j-1}) \lambda_{1+\mu_{j-1}} - \lambda_{1+\mu_p}\\
= {} &  \sum_{\alpha=1}^{1+\mu_p} u_{k\ell} V_{\alpha}^k V_{\alpha}^{\ell} - \sum_{j=1}^{p} \sum_{1+\mu_{j-1} \le \alpha \le \mu_{j}} \lambda_{1+\mu_{j-1}} \delta_{k\ell}V^k_{\alpha} V^{\ell}_{\alpha} - \lambda_{1+\mu_{p}} \\
= {} & \sum_{j=1}^{p} \sum_{1+\mu_{j-1} \le \alpha \le \mu_{j}} (u_{k\ell}-  \lambda_{1+\mu_{j-1}} \delta_{k\ell})V^k_{\alpha} V^{\ell}_{\alpha} + u_{k\ell} W^k W^{\ell} - \lambda_{1+\mu_{p}}.
\end{split}
\]
Now note that $h(x_0)=0$ and $h(x) \ge 0$ for $x$ near $x_0$.  Since $h$ achieves a minimum at $x_0$ we have
\[
\begin{split}
0 = h_i(x_0)={} & \sum_{j=1}^{p} \sum_{1+\mu_{j-1} \le \alpha \le \mu_{j}} (u_{k\ell i}-  (\lambda_{1+\mu_{j-1}})_i \delta_{k\ell})V^k_{\alpha} V^{\ell}_{\alpha}  + u_{k\ell i} W^k W^{\ell} - (\lambda_{1+\mu_{p}})_i \\
= {} & (u_{k\ell i} - \delta_{k\ell} (\lambda_{1+\mu_{p}})_i) W^k W^{\ell},
\end{split}
\]
where for the last line we used the inductive hypothesis and the fact that $W$ is a unit vector.
Since $W$ is an arbitrary unit vector in the span of $\partial/\partial x_{1+\mu_{p}}, \ldots, \partial/\partial x_{\mu_{p+1}}$, we have proved (\ref{fir}) for $j=p+1$ and the result follows by induction.
\end{proof}

As above, pick coordinates at $x_0$ such that $D^2u$ is diagonal with entries $u_{ii}=\lambda_i$.   Fix $m$ between $1$ and $n$.
Define $\rho \in \{ m, m+1, \ldots, n\}$ to be the largest integer such that $\lambda_{\rho} = \lambda_m$ at $x_0$, so that
$$0 \le \lambda_1 \le \cdots \le \lambda_m = \lambda_{m+1} = \cdots = \lambda_{\rho} < \lambda_{\rho+1} \le \cdots \le \lambda_n.$$
Then we have the following lemma on the second derivatives of the $\lambda_{\alpha}$.  We emphasize that this only holds at the point $x_0$ where the $\lambda_i$ are twice differentiable.

\begin{lemma}  \label{lemmala}
As symmetric $n\times n$ matrices we have at $x_0$,
\begin{equation} \label{secondg}
\sum_{\alpha=1}^m (\lambda_{\alpha})_{ab} \le \sum_{\alpha=1}^m u_{\alpha \alpha ab} + 2 \sum_{\alpha=1}^m \sum_{q>\rho} \frac{u_{q\alpha a} u_{q\alpha b}}{\lambda_{\alpha}-\lambda_q}.
\end{equation}
\end{lemma}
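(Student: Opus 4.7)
The plan is to mimic the variational argument used in Lemma \ref{lemmafir}, but now extracting information from the second derivatives. I introduce a smooth orthonormal frame $V_1(x), \ldots, V_m(x)$ defined in a neighborhood of $x_0$, normalized so that $V_\alpha(x_0) = e_\alpha$, and consider
$$h(x) = \sum_{\alpha=1}^{m} u_{ij}(x)\, V_\alpha^i(x) V_\alpha^j(x) - \sum_{\alpha=1}^{m} \lambda_\alpha(x).$$
The variational characterization of $\sigma_m$ used in the proof of Proposition \ref{propsemi} (the infimum over orthonormal $m$-frames) gives $h \ge 0$ near $x_0$, and by construction $h(x_0)=0$. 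Since $u \in C^4$ and each $\lambda_\alpha$ is twice differentiable at $x_0$, $h$ admits a second-order Taylor expansion at $x_0$; the local-minimum condition then forces $h_a(x_0) = 0$ (consistent with Lemma \ref{lemmafir}) and the symmetric matrix $h_{ab}(x_0)$ to be positive semi-definite.

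Expanding $h_{ab}(x_0)$ and writing $A^j_{\alpha, a} := \partial_a V_\alpha^j(x_0)$, I use the diagonality of $D^2u$ at $x_0$ together with the orthonormality identities $A^\beta_{\alpha,a}+A^\alpha_{\beta,a}=0$ for $\alpha,\beta \le m$ and $\partial_a\partial_b V_\alpha^\alpha(x_0) = -\sum_i A^i_{\alpha,a} A^i_{\alpha,b}$ to rewrite $h_{ab}(x_0) \ge 0$ as the symmetric matrix inequality
$$\sum_{\alpha=1}^m (\lambda_\alpha)_{ab} \le \sum_{\alpha=1}^m u_{\alpha\alpha ab} + 2\sum_{\alpha=1}^m \sum_{j=1}^n \bigl(u_{\alpha j a}A^j_{\alpha, b} + u_{\alpha j b}A^j_{\alpha, a}\bigr) + 2\sum_{\alpha=1}^m \sum_{i=1}^n (\lambda_i-\lambda_\alpha)\, A^i_{\alpha, a}A^i_{\alpha, b}.$$
Any choice of first-order data $A$ subject only to the antisymmetry constraint on the $\alpha,j \le m$ block can be realized as $A = \partial V|_{x_0}$ for some smooth orthonormal frame; for instance, take $V_\alpha(x) = \exp(G(x))\, e_\alpha$ for any smooth $\mathfrak{so}(n)$-valued $G$ with $G(x_0)=0$ and prescribed $\partial_a G(x_0)$.

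To get the sharp form in the lemma, I take
$$A^q_{\alpha, a} = \frac{u_{q\alpha a}}{\lambda_\alpha - \lambda_q} \quad \text{for } \alpha \le m \text{ and } q > \rho,$$
and $A^j_{\alpha, a} = 0$ for all other $(\alpha, j)$ (in particular for $j \le m$, where antisymmetry is trivially satisfied). With this choice all terms with $j \le \rho$ or $i \le \rho$ drop, while for $q > \rho$ the cross and quadratic terms combine, via $u_{q\alpha a} = u_{\alpha q a}$ and completion of squares, into $2\sum_\alpha \sum_{q>\rho} u_{q\alpha a} u_{q\alpha b}/(\lambda_\alpha - \lambda_q)$, which is the right-hand side of \eqref{secondg}.

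The main delicate point is correctly merging the second-derivative contribution $2\sum_\alpha \lambda_\alpha \partial_a\partial_b V_\alpha^\alpha$ (arising because $u_{ij}$ is multiplied by $V_\alpha^i V_\alpha^j$) with the quadratic-in-$A$ term, via orthonormality, into the clean tensor $2(\lambda_i - \lambda_\alpha) A^i_{\alpha,a} A^i_{\alpha,b}$. Once this rearrangement is in place, the choice of $A^q_{\alpha,a}$ above is precisely the one that completes the square in the $(\alpha, q)$ slot with $q > \rho$, and the lemma follows.
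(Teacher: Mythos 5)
Your proof is correct and follows essentially the same variational approach as the paper: both test the Ky Fan-type inequality $h=\sum_\alpha u_{ij}V^i_\alpha V^j_\alpha - \sum_\alpha\lambda_\alpha \ge 0$, use $h_{ab}(x_0)\ge 0$, and make the identical choice $\partial_a V^q_\alpha = u_{q\alpha a}/(\lambda_\alpha-\lambda_q)$ for $q>\rho$. The only presentational difference is that you first derive the general inequality in terms of arbitrary first-order data $A$ and eliminate the diagonal second-order data via $\partial_a\partial_b V^\alpha_\alpha = -\sum_i A^i_{\alpha,a}A^i_{\alpha,b}$, then optimize the choice of $A$, whereas the paper prescribes all first and second derivatives of the $V_\alpha$ up front and checks orthonormality directly; the computations are term-for-term equivalent.
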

\begin{proof}
Let $V_1, \ldots, V_m$ be smooth mutually orthogonal unit vector fields defined in a neighborhood of $x_0$ with $V_{\alpha}(x_0)$ the unit vector in the $\partial/\partial x_{\alpha}$ direction.  In particular, writing $V_{\alpha} = (V^1_{\alpha}, \ldots, V^n_{\alpha})$ we have $V^q_{\alpha}=\delta_{q\alpha}$ at $x_0$.  

We consider the quantity
$$h(x) = \sum_{\alpha=1}^m u_{k\ell} V^k_{\alpha} V^{\ell}_{\alpha} - \sum_{\alpha=1}^m \lambda_{\alpha},$$
which has $h(x_0)=0$ and $h(x) \ge 0$ for $x$ near $x_0$.  In particular $h$ achieves a minimum at $x_0$, and moreover, $h$ is twice differentiable at $x_0$.

We prescribe the first and second derivatives of the $V_{\alpha}$ at $x_0$ as follows.  For $1 \le \alpha \le m$, and $1\le a \le n$,
$$\partial_a V^q_{\alpha} = \left\{ \begin{array}{ll} 0, \quad & q \le \rho \\ \frac{u_{\alpha qa}}{\lambda_{\alpha}-\lambda_q}, \quad & q > \rho. \end{array} \right.$$
For $1 \le \alpha, \beta \le m$ and $1\le a,b \le n$,
$$\partial_a \partial_b V^{\alpha}_{\beta} = - \frac{1}{2} \sum_{q>\rho} \frac{u_{\alpha qa} u_{\beta qb}}{(\lambda_{\alpha}-\lambda_q)(\lambda_{\beta}-\lambda_q)} - \frac{1}{2} \sum_{q>\rho} \frac{u_{\alpha qb} u_{\beta qa}}{(\lambda_{\alpha}-\lambda_q)(\lambda_{\beta}-\lambda_q)}$$
noting that when $\alpha=\beta$ we have
$$\partial_a \partial_b V^{\alpha}_{\alpha} = - \sum_{q>\rho} \frac{u_{\alpha qa} u_{\alpha qb}}{(\lambda_{\alpha}-\lambda_q)^2}.$$
We take $\partial_a \partial_b V^q_{\alpha}=0$ with $q >m$.

We first check these definitions are consistent with the $V_{\alpha}$ being orthonormal vectors.  Compute at $x_0$, for $\alpha, \beta=1, \ldots, m$,
$$\partial_a \left( \sum_q V^q_{\alpha} V^q_{\beta} \right) = \sum_q (\partial_a V^q_{\alpha}) V^q_{\beta} + \sum_q V^q_{\alpha} \partial_a V^q_{\beta}=0,$$
since $\partial_a V^q_{\alpha}$ and $\partial_a V^q_{\beta}$ vanish when $q \le m \le \rho$.  And
\[
\begin{split}
\partial_a \partial_b \left( \sum_q V_{\alpha}^q V_{\beta}^q \right) = {} & \sum_{q >\rho} (\partial_a V^q_{\alpha})(\partial_b V^q_{\beta}) +  \sum_{q >\rho} (\partial_b V^q_{\alpha})(\partial_a V^q_{\beta})   +  \partial_a \partial_b V^{\beta}_{\alpha} +  \partial_a \partial_b V^{\alpha}_{\beta} \\
= {} & \sum_{q > \rho} \frac{u_{\alpha qa} u_{\beta qb}}{(\lambda_{\alpha}-\lambda_q)(\lambda_{\beta}-\lambda_q)} +\sum_{q > \rho} \frac{u_{\alpha qb} u_{\beta qa}}{(\lambda_{\alpha}-\lambda_q)(\lambda_{\beta}-\lambda_q)} \\
{} &-  \sum_{q>\rho} \frac{u_{\alpha qa} u_{\beta qb}}{(\lambda_{\alpha}-\lambda_q)(\lambda_{\beta}-\lambda_q)} -  \sum_{q>\rho} \frac{u_{\alpha qb} u_{\beta qa}}{(\lambda_{\alpha}-\lambda_q)(\lambda_{\beta}-\lambda_q)} \\
= {} & 0,
\end{split}
\]
as required.

Since $h$ has a minimum at $x_0$ we have the inequality of matrices:
\[
\begin{split}
0 \le h_{ab} = {} & \sum_{\alpha=1}^m \left\{  u_{\alpha \alpha ab} - (\lambda_{\alpha})_{ab} +  2u_{k\ell a} (\partial_b V_{\alpha}^k) V_{\alpha}^{\ell} + 2u_{k\ell b} (\partial_a V_{\alpha}^k) V_{\alpha}^{\ell} \right. \\
{} & \left. + 2 u_{k\ell} (\partial_a V^k_{\alpha}) (\partial_b V^{\ell}_{\alpha}) + 2u_{k\ell} (\partial_a \partial_b V^k_{\alpha}) V^{\ell}_{\alpha} \right\} \\
= {}  & \sum_{\alpha=1}^m \left\{  u_{\alpha \alpha ab} - (\lambda_{\alpha})_{ab} + 4 \sum_{q >\rho} \frac{u_{q\alpha a} u_{\alpha q b}}{\lambda_{\alpha}-\lambda_q} \right. \\
{} &\left. + 2\sum_{q > \rho} \lambda_q \frac{u_{\alpha qa} u_{\alpha qb}}{(\lambda_{\alpha}-\lambda_q)^2} - 2 \lambda_{\alpha} \sum_{q>\rho} \frac{u_{\alpha qa} u_{\alpha qb}}{(\lambda_{\alpha}-\lambda_q)^2} \right\} \\ 
= {} &  \sum_{\alpha=1}^m \left\{  u_{\alpha \alpha ab} - (\lambda_{\alpha})_{ab} + 2 \sum_{q >\rho} \frac{u_{q\alpha a} u_{\alpha q b}}{\lambda_{\alpha}-\lambda_q} \right\},
\end{split}
\]
giving (\ref{secondg}).
\end{proof}

We can now complete the proof of Lemma \ref{lemmadi},   making crucial use of the convexity condition.  Following \cite{BG}, 
we observe that the convexity condition (\ref{convexcondition}) can be written as:  for every symmetric matrix $(X_{ab}) \in \textrm{Sym}_n(\mathbb{R})$, vector $(Z_a) \in \mathbb{R}^n$ and $Y \in \mathbb{R}$, 
\begin{equation} \label{keyco}
\begin{split}
0 \le {} &  F^{ab,rs} X_{ab} X_{rs} + 2   F^{ar} A^{bs} X_{ab} X_{rs} +  F^{x_a, x_b} Z_a Z_b \\
& {} - 2 F^{ab,u} X_{ab} Y - 2  F^{ab, x_r} X_{ab} Z_r + 2 F^{u,x_a} Y Z_a + F^{u,u} Y^2, 
\end{split}
\end{equation}
where we are evaluating the derivatives of $F$ at $(A, p, u, x)$ for a positive definite matrix $A$.  We are writing $A^{ij}$ for the  $(i,j)$th entry of the inverse matrix of $A$.  We remark that we do not require the condition (\ref{convexcondition}) to hold for the entire space $\textrm{Sym}_n^+(\mathbb{R}) \times \mathbb{R} \times B$, but only for a certain subset which depends on the solution $u$.

\begin{proof}[Proof of Lemma \ref{lemmadi}]
Observe that
$$Q = \sum_{m=1}^k \sum_{\alpha =1}^m \lambda_{\alpha}.$$
We will compute at a point $x_0$ at which the $\lambda_i$ are twice differentiable and $D^2u$ is diagonal with entries $u_{ii} =\lambda_i$.
From Lemma \ref{lemmala} we have
\[
\begin{split}
F^{ab} Q_{ab} = {} &  \sum_{m=1}^k \sum_{\alpha=1}^m F^{ab} (\lambda_{\alpha})_{ab} \\
\le {} & \sum_{m=1}^k \sum_{\alpha=1}^m F^{ab} u_{\alpha \alpha ab} + 2\sum_{m=1}^k \sum_{\alpha=1}^m \sum_{q>\rho_m}F^{ab} \frac{u_{q\alpha a}u_{q\alpha b}}{\lambda_{\alpha} - \lambda_q},
\end{split}
\]
where we are writing $\rho_m$ for the largest integer $\rho_m \in \{m, m+1, \ldots,   n\}$ with the property that $\lambda_{\rho_m} = \lambda_m$ at $x_0$.  Differentiating the equation
$$F(D^2u, Du, u,x)=0,$$
twice in the $x_{\alpha}$ direction gives
\begin{equation} \label{dt}
\begin{split}
0 = {} & F^{ab} u_{ab\al \alpha} + F^{p_a} u_{a\al \al} + F^u u_{\al \al}  \\
{} & + F^{ab, rs} u_{ab\al} u_{rs\al} + F^{p_a, p_b} u_{a\al} u_{b\al} + F^{u,u} u_{\al}^2 + F^{x_{\al}, x_{\al}} \\
{} & + 2F^{ab, p_r} u_{ab\al} u_{r\al} + 2F^{ab,u} u_{ab\al} u_{\al} + 2F^{ab,x_{\al}} u_{ab{\al}} \\
{}& + 2F^{p_a, u} u_{a\al} u_{\al} + 2F^{p_a, x_{\al}} u_{a{\al}} + 2F^{u, x_{\al}} u_{\al}. 
\end{split}
\end{equation}
Hence,
\[
\begin{split}
F^{ab} Q_{ab} \le {} & \sum_{m=1}^k  \sum_{\alpha=1}^m \bigg\{2 \sum_{q>\rho_m}F^{ab} \frac{u_{q\alpha a}u_{q\alpha b}}{\lambda_{\alpha} - \lambda_q} - \sum_{a,b,r,s > \rho_k} F^{ab,rs} u_{ab\al} u_{rs\al} - F^{u,u} u_{\al}^2 - F^{x_{\al}, x_{\al}} \\
{} & - 2 \sum_{a,b>\rho_k} F^{ab, u} u_{ab\al}u_{\al} - 2 \sum_{a,b>\rho_k} F^{ab, x_{\al}} u_{ab\al} - 2F^{u, x_{\al}} u_{\al}  \bigg\} \\
{} & + C Q + \sum_{i=1}^k b^{i,j} (\lambda_i)_j +  \sum_{1 \le \alpha< \beta \le \rho_k} c^{j, \alpha, \beta} u_{\alpha \beta j},
\end{split}
\]
for uniformly bounded $b^{i,j}$, $c^{j, \alpha, \beta}$.  Here we are using Lemma \ref{lemmafir} which implies that if $1 \le \alpha \le \rho_k$ then at $x_0$,
$$u_{\alpha \alpha j} = (\lambda_i)_j,$$
for some $1\le i \le k$ with $\lambda_i=\lambda_{\alpha}$.
  But
\begin{equation}
\begin{split}
 \sum_{1 \le \alpha< \beta \le \rho_k} c^{j, \alpha, \beta} u_{\alpha \beta j} \le {} & C \sum_{1 \le \alpha <\beta \le \rho_k} (\lambda_{\beta}-\lambda_{\alpha}) +  2\sum_{1 \le \alpha <\beta \le \rho_k, \ \lambda_{\alpha}\neq \lambda_{\beta}} F^{ab} \frac{u_{\alpha \beta a} u_{\alpha \beta b}}{\lambda_{\beta}-\lambda_{\alpha}}   \\
\le {} & CQ + 2\sum_{\alpha=1}^{k-1} \sum_{\rho_{\alpha} < q \le \rho_k} \frac{F^{ab} u_{\alpha qa} u_{\alpha qb}}{\lambda_q -\lambda_{\alpha}} \\
\le {} &  CQ +  2\sum_{m=1}^{k-1} \sum_{\alpha=1}^m \sum_{\rho_m < q \le \rho_k  }F^{ab} \frac{u_{q\alpha a}u_{q\alpha b}}{\lambda_{q} - \lambda_{\al}},\end{split}
\end{equation}
where for the first line we note that if  $1 \le \alpha <\beta \le \rho_k$ with $\lambda_{\alpha}=\lambda_{\beta}$  then by Lemma \ref{lemmafir} we have $u_{\alpha \beta j}=0$.

We now apply the convexity assumption (\ref{keyco}), taking for each fixed $\alpha$,
\begin{equation} \label{choice}
X_{ab} = \left\{ \begin{array}{ll} - u_{ab\alpha} & \text{ if }a,b > \rho_k \\
0  & \text{ otherwise}, \end{array} \right.
\quad  
Z_a = \left\{ \begin{array}{ll} 1 & \text{ if } a=\alpha \\ 0 & \text{ otherwise,} \end{array} \right.
\quad Y = u_\alpha.
\end{equation}
This gives for each $\alpha=1, \ldots, m$,
\begin{equation}
\begin{split}
0 \le {} & \sum_{a,b,r,s >\rho_k} F^{ab, rs} u_{ab\al} u_{rs\al} + 2 \sum_{a,b,q>\rho_k} F^{ab} \frac{u_{qa\al} u_{qb\al}}{\lambda_q} + F^{x_{\al}, x_{\al}} \\
{} & + 2 \sum_{a,b>\rho_k} F^{ab, u} u_{ab{\al}} u_{\al} + 2\sum_{a,b>\rho_k} F^{ab, x_{\al}} u_{ab\al} +2F^{u, x_{\al}} u_{\al} + F^{u,u} u_{\al}^2.
\end{split}
\end{equation}

Observe that
\begin{equation}
2\sum_{m=1}^k \sum_{\alpha=1}^m \sum_{a,b,q>\rho_k} F^{ab} \frac{u_{qa\al} u_{qb\al}}{\lambda_q} \le 2 \sum_{m=1}^k \sum_{\al=1}^m \sum_{q > \rho_k} F^{ab} \frac{u_{q\alpha a} u_{q\alpha b}}{\lambda_q-\lambda_{\al}}.
\end{equation}
Combining all of the above gives
\begin{equation}
F^{ab}Q_{ab} \le CQ + \sum_{i=1}^k b^{i,j} ( \lambda_i)_j,
\end{equation}
as required.  \end{proof}

\section{Proof of Theorem \ref{mainthm}} \label{section4}

We will first assume that $u \in C^4(B)$ and then remove this assumption by approximation.
The quantity $Q_k = \lambda_k + 2\lambda_{k-1} + \cdots + k\lambda_1$ is semi-concave and from Lemma \ref{lemmadi} we have almost everywhere
$$F^{ab} (Q_k)_{ab} \le C Q_k +   \sum_{i=1}^k b^{i,j} (\lambda_i)_j,$$
for $b^{i,j}$ bounded.  For $\ve>0$ and a fixed $\ell \in \{1, 2, \ldots, n\}$, consider 
$$R = \sum_{k=1}^{\ell} (Q_k+\ve)^{1/2}.$$
Then by Proposition \ref{comp}, since the map $x \mapsto (x+\ve)^{1/2}$ is increasing, Lipschitz continuous and concave for $x \ge 0$, we see that $R$ is semiconcave (but note that its constant of semi-concavity will depend on $\ve$).  $R$ is twice differentiable almost everywhere.  At such a point, we compute
\[
\begin{split}
F^{ab} R_{ab} = {} & \frac{1}{2} \sum_{k=1}^{\ell} (Q_k+\ve)^{-1/2} F^{ab} (Q_k)_{ab} - \frac{1}{4} \sum_{k=1}^{\ell} (Q_k+\ve)^{-3/2} F^{ab}(Q_k)_a (Q_k)_b \\ 
\le {} & \frac{1}{2} \sum_{k=1}^{\ell} (Q_k+\ve)^{-1/2} (CQ_k + \sum_{i=1}^k b^{i,j} ( \lambda_i)_j) - c\sum_{k=1}^{\ell} (Q_k+\ve)^{-3/2} |DQ_k|^2 \\
\le {} & CR + C \sum_{k=1}^{\ell} (Q_k+\ve)^{-1/2} \sum_{i=1}^k |D\lambda_i| -  c\sum_{k=1}^{\ell} (Q_k+\ve)^{-3/2} |DQ_k|^2,
\end{split}
\]
for uniform constants $C, c>0$ (the constant $C$ may differ from line to line).  We claim that
\begin{equation} \label{Qkclaim}
\sum_{k=1}^{\ell} (Q_k+\ve)^{-1/2} \sum_{i=1}^k |D\lambda_i| \le C \sum_{k=1}^{\ell} (Q_k+\ve)^{-1/2} |DQ_k|,
\end{equation}
for a universal constant $C$.
We do this by induction on $\ell$.  The case $\ell=1$ is trivial.  Assume it holds for $\ell-1$.
We need to show that
\[
\begin{split}
(Q_{\ell}+\ve)^{-1/2}  \sum_{i=1}^{\ell} |D\lambda_i|  \le C\sum_{k=1}^{\ell} (Q_k+\ve)^{-1/2} |DQ_k|.
\end{split}
\]
But using $Q_{\ell-1} \le Q_{\ell}$, the inductive hypothesis and the definition of $Q_{\ell}$ we have
\[
\begin{split}
(Q_{\ell}+\ve)^{-1/2}  \sum_{i=1}^{\ell} |D\lambda_i|  \le {} &  (Q_{\ell-1}+\ve)^{-1/2} \sum_{i=1}^{\ell-1} |D \lambda_i| + (Q_{\ell}+\ve)^{-1/2} |D\lambda_{\ell}| \\
\le {} & C\sum_{k=1}^{\ell-1} (Q_k+\ve)^{-1/2} |DQ_k| + (Q_{\ell}+\ve)^{-1/2} \left(|D Q_{\ell}| + C \sum_{i=1}^{\ell-1} |D \lambda_i| \right) \\
\le {} & C\sum_{k=1}^{\ell-1} (Q_k+\ve)^{-1/2} |DQ_k| + (Q_{\ell}+\ve)^{-1/2} |D Q_{\ell}| \\ {} & + (Q_{\ell-1}  +\ve)^{-1/2} C \sum_{i=1}^{\ell-1} |D \lambda_i | \\
\le {} & C\sum_{k=1}^{\ell} (Q_k+\ve)^{-1/2} |DQ_k|.
\end{split}
\]
This completes the proof of (\ref{Qkclaim}).

It follows that, for constants $C, c>0$ independent of $\ve$,
\[
\begin{split}
F^{ab} R_{ab} \le  {} & C R + C \sum_{k=1}^{\ell} (Q_k+\ve)^{-1/2} |D Q_k| - c \sum_{k=1}^{\ell} (Q_k+\ve)^{-3/2} |DQ_k|^2 \\
\le {} &  C R,
\end{split}
\]
using the bound
$$(Q_k+\ve)^{-1/2} |D Q_k| \le \frac{\delta}{2} (Q_k+\ve)^{-3/2} |DQ_k|^2 + \frac{1}{2\delta} (Q_k+\ve)^{1/2},$$
which holds for any $\delta>0$.

Since $R$ is semi-concave, the weak Harnack inequality (Proposition \ref{lemmaweak} above) implies that for a uniform $q>0$ and $C$,
\begin{equation} 
\| R \|_{L^q(B_{1/2})} \le C \inf_{B_{1/2}} R.
\end{equation}
In particular, the constant $C$ is independent of $\ve$.  Hence we can let $\ve \rightarrow 0$ to obtain the same estimate for $\sum_{k=1}^{\ell} Q_k^{1/2}$.   Write 
$$S = \left( \sum_{k=1}^{\ell} Q_k^{1/2} \right)^2.$$

Now observe that for a  $C$ depending only on $n$ we have
$$\frac{1}{C} \lambda_{\ell} \le S \le C  \lambda_{\ell}.$$
We have
$$\| S \|_{L^{q/2}(B_{1/2})}^{1/2} \le C \inf_{B_{1/2}} S^{1/2},$$
and squaring both sides gives
$$\| S \|_{L^{q/2}(B_{1/2})} \le C \inf_{B_{1/2}} S$$
and hence
$$\|  \lambda_{\ell} \|_{L^{q/2}(B_{1/2})} \le C \inf_{B_{1/2}}  \lambda_{\ell}.$$
This completes the proof of the theorem in the case that $u \in C^4(B)$.  

For $u \in C^3(B)$ as in the statement of the theorem, the elliptic equation satisfied by $u$ implies that $u \in W_{\textrm{loc}}^{4,p}(B)$ for all $p$.  Fix $p>n$ and a ball $\tilde{B}$ with $B_{1/2} \subset \subset \tilde{B}\subset \subset B$.  Then we can find a sequence of smooth convex functions $u^{(s)}$ in a neighborhood of $\tilde{B}$ such that $u^{(s)} \rightarrow u$ in  $W^{4,p}(\tilde{B})$ as $s\rightarrow \infty$.  This also implies that $u^{(s)} \rightarrow u$ in $C^3(\tilde{B})$.  We wish to apply Lemma \ref{lemmadi} to each $u^{(s)}$.  Although $u^{(s)}$ does not solve $F(D^2u, Du, u, x)=0$ we see from (\ref{dt}) and  $F\in C^2$ that $\tilde{u} := u^{(s)}$ satisfies
\begin{equation} \label{dt2}
\begin{split}
0 = {} & \tilde{F}^{ab} \tilde{u}_{ab\al \alpha} + \tilde{F}^{p_a} \tilde{u}_{a\al \al} + \tilde{F}^u \tilde{u}_{\al \al}  \\
{} & + \tilde{F}^{ab, rs} \tilde{u}_{ab\al} \tilde{u}_{rs\al} + \tilde{F}^{p_a, p_b} \tilde{u}_{a\al} \tilde{u}_{b\al} + \tilde{F}^{u,u} \tilde{u}_{\al}^2 + \tilde{F}^{x_{\al}, x_{\al}} \\
{} & + 2\tilde{F}^{ab, p_r} \tilde{u}_{ab\al} \tilde{u}_{r\al} + 2\tilde{F}^{ab,u} \tilde{u}_{ab\al} \tilde{u}_{\al} + 2\tilde{F}^{ab,x_{\al}} \tilde{u}_{ab{\al}} \\
{}& + 2\tilde{F}^{p_a, u} \tilde{u}_{a\al} \tilde{u}_{\al} + 2\tilde{F}^{p_a, x_{\al}} \tilde{u}_{a{\al}} + 2\tilde{F}^{u, x_{\al}} \tilde{u}_{\al} - f^{(s)}, 
\end{split}
\end{equation}
where $f^{(s)} \rightarrow 0$ in $L^p(\tilde{B})$ as $s \rightarrow \infty$.   Here we are using $\tilde{F}$ to indicate that we are evaluating the derivatives of $F$  at $\tilde{u}$.  Carrying out the rest of the proof of Lemma \ref{lemmadi} with this extra term $f^{(s)}$ gives almost everywhere on $\tilde{B}$,
$$\tilde{F}^{ab} \tilde{Q}_{ab} \le C \tilde{Q} + \sum_{i=1}^k b^{i,j}( \tilde{\lambda}_i)_j + f^{(s)},$$
modifying $f^{(s)}$ if necessary, and  evaluating $\tilde{\lambda}_i$, $\tilde{Q}$ etc with respect to $\tilde{u}$.  Then $\tilde{R}$ satisfies almost everywhere on $\tilde{B}$,
$$\tilde{F}^{ab} \tilde{R}_{ab} \le C \tilde{R} + \frac{1}{2}\ve^{-1/2} f^{(s)}.$$
Applying Proposition \ref{lemmaweak} we obtain
$$\| \tilde{R} \|_{L^q(B_{1/2})} \le C \inf_{B_{1/2}} \tilde{R} + C \ve^{-1/2} \| f^{(s)} \|_{L^n(\tilde{B})}$$
and letting $s \rightarrow \infty$ gives
$$\| R \|_{L^q(B_{1/2})} \le C \inf_{B_{1/2}} R,$$
for a constant $C$ independent of $\ve$.
The rest of the proof goes through as above, and this completes the proof of Theorem \ref{mainthm}.

\end{document}